\numberwithin{equation}{section}
\newtheorem{theorem}{Theorem}[section]
\newtheorem{corollary}[theorem]{Corollary}
\newtheorem{lemma}[theorem]{Lemma}
\theoremstyle{definition}
\newtheorem{defn}[theorem]{Definition}
\newtheorem{remark}[theorem]{Remark}
\def \({\left(}
\def \){\right)}
\def \<{\langle}
\def \>{\rangle}
\begin{document}
\title[SKT manifolds]{A construction of SKT manifolds using toric geometry}

\author[H. Coban]{Hatice Coban}

\address{Department of Mathematics and Computer Science, University of Antwerp imec-IDlab, Antwerp, Belgium} 

\email{hatice.coban@uantwerpen.be}

\author[C. Haciyusufoglu]{Cagri Haciyusufoglu}
 \address{ Mathematics Group, Middle East Technical University, Northern Cyprus Campus, Guzelyurt, Mersin 10, Turkey} \email{hcagri@metu.edu.tr} 
 
\author[M. Poddar]{Mainak Poddar} 

\address{Mathematics Department, Indian Institute of Science Education and Research, Pune, India}

\email{mainak@iiserpune.ac.in}

\subjclass[2010]{53C55, 14M25, 32L05, 32Q99, 53C07, 53C15}

\keywords{Complex manifolds, special hermitian structures, toric varieties, SKT metric, Bott manifolds, $J$-construction.}

\begin{abstract} We produce infinite families of SKT manifolds by using methods of toric geometry like the 
	$J$-construction. These SKT manifolds are total spaces of certain principal $G$-bundles over smooth projective toric varieties, where $G$ is an even dimensional compact connected Lie group.
\end{abstract}


\maketitle

\section{Introduction}
 
 An SKT structure  on a complex manifold is a generalization of the more familiar notion of K\"ahler structure in the following precise sense.
Consider a Hermitian manifold $E$  with Riemannian metric $g$ and integrable complex structure $\mathcal{J}$. Consider a tangential connection $\nabla$ on $E$ which is compatible with the Hermitian structure and define the associated torsion $3$-tensor
 $\mu$ by 
 $$\mu(A,B,C) = g(T(A,B), C) \, ,$$ where $T$ denotes the torsion $2$-form of $\nabla$, and $A$, $B$, and $C$ are arbitrary vector fields on $E$.
 
 Then $M$ admits a unique Hermitian connection $\nabla$ such that the torsion tensor $\mu$ 
  is totally skew symmetric. This connection is known as  the  KT (K\"ahler with torsion) or Bismut connection. Bismut used it to prove a local index formula for the Dolbeault operator when the complex manifold is not K\"ahler \cite{Bis}. 

For the KT connection, we actually have 
$$ \mu(A,B,C) = dF( \mathcal{J} A, \mathcal{J} B, \mathcal{J} C)\,,$$ where $F$ denotes the fundamental $2$-form defined
 by $F(A,B) = g( \mathcal{J} A, B)$. Note that $M$ is K\"ahler if $dF =0 $, or equivalently, if $\mu = 0$. 
  A KT connection is called  strong KT or SKT if $d\mu = 0 $ or equivalently, if 
 $dd^c F = 0$ (or $\partial\overline{\partial} F = 0$). Thus, we may say that $E$ admits
 an SKT structure if it admits a Hermitian metric whose fundamental form is $dd^c$-closed or, equivalently,
 $\partial\overline{\partial}$-closed. Such a metric is referred to as an SKT metric.

 SKT  structures arise naturally in two dimensional sigma models with $(4,0)$ supersymmetry in physics \cite{HP}.  
  They are also closely related to generalized K\"ahler geometry 
 (cf. \cite{Hit,Gua,FT}). A generalized K\"ahler structure is a pair of SKT structures with a certain compatibility condition. 

Various  constructions and  properties of SKT manifolds have been studied extensively (cf. \cite{Gaud,FG,FPS,MS}).
 In particular, Grantcharov et. al. \cite{ggp} have given a construction of  SKT structures on the 
 total space of  torus principal bundles of even rank, say $2k$, over a complex K\"ahler manifold
 under the condition
 \begin{equation}\label{sq}
  \sum_{j =1}^{2k} w_j^2 = 0  
 \end{equation}
 where $w_j$'s are certain characteristic classes (see Section \ref{metric}).  When the base manifold of the principal bundle is of dimension $4$, then the
 sufficient condition \eqref{sq} may be dealt with using the intersection form on the middle cohomology of the base.  In \cite{ggp} some examples of SKT manifolds of dimension six are given using this principle.  However, the condition is not easy to verify in general. In fact, nontrivial torus principal bundles that satisfy   \eqref{sq} seem to be relatively rare. 
  
   In these notes, we give infinite families of examples  of nontrivial torus bundles over manifolds of arbitrary  dimension that satisfy the condition \eqref{sq}. Our main tools are from toric geometry.
   We observe that \eqref{sq} holds for certain torus 
   bundles over every Bott manifold.  The main point is that these manifolds admit nontrivial line bundles whose first Chern class has square zero. Moreover, some of these examples may be used to produce an infinite family of examples by applying the $J$-construction in toric geometry. Finally, using some 
   results of \cite{PT}, we produce more SKT manifolds from each of the above examples by extension of structure group.

\section{Smooth toric varieties}\label{stv}

The theory of toric varieties and their topological counterparts are widely studied and there are many good references like \cite{Ful,Oda,BP} etc. for them. The main purpose of this section is to introduce some definitions and notations. 

A complex toric variety $X$ of complex dimension $n$ is a (partial) compactification of the algebraic torus
 $T^n_{\mathbb{C} }=(\mathbb{C}^{\ast})^n$, which admits a natural extension of the 
 translation action of  $T^n_{\mathbb{C}} $ on itself. We will only deal with toric varieties that are smooth (nonsingular) and compact (complete). A projective toric variety is, of course, compact.  For a smooth toric variety,
 the action of  $T^n_{\mathbb{C}}$ in a neighborhood of any fixed point is the same, up to an automorphism, as its standard action on $(\mathbb{C}^{\ast})^n$.
 
  A key role is played in toric geometry by the  $T^n_{\mathbb{C}}$-invariant subvarieties of $X$
  of complex codimension one, known as the torus invariant divisors. There are finitely many of them and let these be $D_1, \ldots, D_m$. 
   Under natural choice of orientations induced by the complex structure, the  stabilizer of such a divisor $D_i$ can be identified with a vector $\lambda_i$ in the co-character lattice $N \cong \mathbb{Z}^n$ of complex one-parameter  subgroups of  $T^n_{\mathbb{C}}$.  Topologists
  refer to $\lambda_i$ as a 
  (directed) characteristic vector. One can form an $n \times m$ matrix 
  $$\Lambda = [\lambda_1, \ldots, \lambda_m] $$ with the characteristic vectors of 
  the invariant divisors, in some fixed order, as columns. Such a matrix will be called a characteristic matrix.  
  
  We denote the coordinates of the characteristic vector $\lambda_i$ by $\lambda_{ji}$, $ 1\le j \le n$.
  Let $w_i$ denote the  first Chern class of the complex line bundle $L_i$ that corresponds to $D_i$ under the divisor-line bundle correspondence. Then the classes $w_1, \ldots, w_m$ generate the integral cohomology ring of the smooth toric variety $X$. The linear relations among the $w_j$'s are encoded in the rows of the corresponding characteristic matrix.

  Each characteristic vector $\lambda_i$ generates a ray in $N \otimes \mathbb{R}$, which we will still denote
 by ${\lambda_i}$ for notational simplicity. These give rise to a combinatorial gadget called the fan, denoted by $\Sigma$, of the toric variety. The fan is a collection of cones. The zero vector forms the unique $0$-dimensional cone in the fan.
 Rays corresponding to characteristic vectors constitute the $1$-dimensional cones of the fan. More generally a collection $\lambda_{i_1}, \ldots,  \lambda_{i_k} $ of characteristic vectors generate a $k$-dimensional cone  in the fan if the corresponding torus invariant divisors have nonempty intersection. We denote such a cone by 
 $ \langle \lambda_{i_1}, \ldots,  \lambda_{i_k} \rangle $.
  The fan completely determines the  geometry of the toric variety. For instance, the singular cohomology ring of a smooth compact toric variety $X$ with integer coefficients (cf. \cite[pp. 134]{Oda}) is given by 
 \begin{equation}\label{cohring}
  H^{\ast}(X) = \mathbb{Z}[w_1, \ldots, w_m] / \mathcal{I} 
  \end{equation}
  where the ideal $\mathcal{I}$ is generated by 
  \begin{enumerate}
  	\item all products $w_{i_1}\cdots w_{i_k}$ such that $\lambda_{i_1}, \ldots, \lambda_{i_k}$  do not form a cone of $\Sigma$, and
  	\item all linear combinations $\sum_{i =1}^m  \lambda_{ji} w_i$  where  $1 \le j \le n$.
  \end{enumerate}

  \section{Bott manifolds}\label{bm}
  
  A Bott tower (cf. \cite{BS,GK})  of height $t$,  
  $$ M_t \to M_{t-1} \to \ldots \to M_2 \to M_1 \to M_0 = \{ \rm{point} \} \,,$$  
  is an iterated projective bundle such that each $M_k$ is a 
  $\mathbb{P}^1$-bundle over   $M_{k-1}$. More precisely, 
  $M_k = \mathbb{P}( \mathcal{O}_{M_{k-1}} \oplus \mathcal{L}_{k-1})$ where $\mathcal{L}_{k-1}$ is a line bundle over $M_{k-1}$.  Each $M_k$ is a smooth projective toric variety of complex dimension $k$ and is known as a Bott manifold.  Note that $M_1$ is just $\mathbb{P}^1$ and $M_2$ is a Hirzebruch surface.
  
   We now describe the fan of $M_k$ for $k \ge 1$. Let $e_1,
  \ldots, e_k$ denote the standard basis of $\mathbb{Z}^k$. 
    We identify $e_1, \ldots, e_{k-1}$ with the standard basis of $\mathbb{Z}^{k-1}$ without confusion. 
  The manifold $M_k$ in a Bott tower has $2k$ characteristic vectors which we denote by $\lambda^{(k)}_1, \ldots, \lambda^{(k)}_{2k}$.
   These may be defined inductively as follows: Start with 
   $\lambda_1^{(1)} = e_1$ and $\lambda_2^{(1)} = - e_1$.  
   Then define
  $$\lambda_i^{(k)} = \lambda_i^{(k-1)} \quad  {\rm and} \quad \lambda^{(k)}_{k+i} =  \lambda^{(k-1)}_{k-1+i}  + c_{i,k} e_k \quad
   {\rm for} \quad  i = 1, \ldots, k-1 \,.$$
   Moreover, define $$\lambda_k^{(k)} = e_k \quad {\rm and}  \quad \lambda_{2k}^{(k)} = - e_k \,.$$    Here each $c_{i,k}$ is an integer. It is to be noted that different values of these constants may produce different manifolds $M_k$. 
   
      With respect to the standard basis, the characteristic matrix of $M_k$ has the following  form. 
   
  \begin{equation}\label{lamk} \Lambda(M_k) =    \left[  \begin{array}{cccccccc}
1 & 0 & \ldots & 0 & -1 & 0&\ldots & 0\\
0 & 1 & \ldots & 0 & c_{1,2} &  -1 & \ldots & 0 \\ 
\ldots & \ldots & \ldots & \ldots & \ldots & \ldots & \ldots & \ldots \\
0 & 0 & \ldots & 1 & c_{1,k} & c_{2,k} & \ldots & -1 \\
\end{array} \right]    \end{equation}

   There are $2^k$  cones in the fan of $M_k$ of the top dimension $k$. These are also  easy to identify inductively. The top dimensional cones for $M_1$ are simply $\langle e_1 \rangle$ and
    $\langle -e_1 \rangle$.
    Corresponding to each $(k-1)$-dimensional cone 
    $$\sigma = \langle \lambda^{(k-1)}_{i_1}, \ldots,  \lambda^{(k-1)}_{i_{k-1} } \rangle$$
    	 in the fan of $M_{k-1}$, there are 
   two $k$-dimensional cones, namely $\langle \widetilde{\sigma}, e_k \rangle$ and  $\langle \widetilde{\sigma}, - e_k \rangle$, in the fan of $M_k$ where
   $$\widetilde{\sigma} = \langle \lambda^{(k)}_{\tilde{i}_1}, \ldots,  \lambda^{(k)}_{\tilde{i}_{k-1}}  \rangle \, \quad {\rm and} \; \tilde{i}_j = \left\{ \begin{array}{ll} i_j & {\rm if}\; i_j <k \\
                                                                         i_j + 1 & {\rm if}\; i_j \ge k 
                                                                         \end{array} \right.   $$

   \begin{lemma}\label{linesquare}
   		There exists a nontrivial line bundle on $M_k$  whose first Chern class has square zero, for any $k\ge 1$. There exist at least two such independent line bundles on $M_k$ if $k \ge 2$.  
   \end{lemma}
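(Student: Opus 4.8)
The plan is to exhibit explicit line bundles on $M_k$ by writing down cohomology classes in terms of the generators $w_1, \ldots, w_{2k}$ and using the presentation \eqref{cohring} of $H^*(M_k)$ to compute squares. First I would recall the standard description of the cohomology of a Bott manifold coming from \eqref{lamk}: the linear relations in part (2) of the ideal $\mathcal{I}$ let us eliminate the first $k$ generators, so that $H^*(M_k)$ is generated by $w_{k+1}, \ldots, w_{2k}$ (the classes of the ``fiber'' divisors at each stage), subject to the monomial relations from part (1). Writing $x_j := w_{k+j}$ for $j = 1, \ldots, k$, the relations coming from the fan structure of a Bott tower are the ``quadratic'' ones $x_j \bigl(x_j - \sum_{i<j} c_{i,j}\, x_i - (\text{correction from lower stages})\bigr) = 0$; more precisely, $x_j^2 = x_j \cdot \ell_j$ where $\ell_j$ is a specific $\mathbb{Z}$-linear combination of $x_1, \ldots, x_{j-1}$ determined by the $c_{i,j}$'s. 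I would record this once carefully, since all the square computations reduce to it.

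Next, the candidate class: take $x_1 = w_{k+1}$, the pullback of the hyperplane-type class on $M_1 = \mathbb{P}^1$. Since $M_1 = \mathbb{P}^1$ has $x_1^2 = 0$ (there is no $\ell_1$ — there are no earlier generators), and this relation persists in $H^*(M_k)$ via the projection $M_k \to M_1$, we get a nontrivial line bundle with $c_1^2 = 0$. This handles $k \ge 1$. For $k \ge 2$ I need a second, independent such class. Here I would look at $x_2 = w_{k+2}$, whose square satisfies $x_2^2 = c_{1,2}\, x_1 x_2$ by the stage-two relation. This is generally nonzero, so $x_2$ itself does not work; instead I would look for a class of the form $a x_1 + b x_2$ with $a, b \in \mathbb{Z}$ and solve $(a x_1 + b x_2)^2 = 0$ in $H^4(M_k)$. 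Expanding using $x_1^2 = 0$ and $x_2^2 = c_{1,2} x_1 x_2$ gives $(2ab + b^2 c_{1,2})\, x_1 x_2 = 0$, and since $x_1 x_2 \ne 0$ (it is the class of a nonempty torus-fixed-point stratum, hence a generator of $H^4$), the condition is the purely numerical $b(2a + b\,c_{1,2}) = 0$. Choosing $b = 2$, $a = -c_{1,2}$ produces a class $-c_{1,2} x_1 + 2 x_2$ with square zero; it is independent of $x_1$ in $H^2(M_k)$ because $x_1, \ldots, x_k$ form a basis, and the line bundle is nontrivial as its Chern class is nonzero. (One should note $M_2$ a Hirzebruch surface recovers the classical fact about its intersection form.)

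The main obstacle I anticipate is being fully careful about the quadratic relations at higher stages: the inductive shift $\tilde i_j$ in the fan description means that $w_{k+i}^{(k)}$ is \emph{not} simply the pullback of $w_{(k-1)+i}^{(k-1)}$, so the relation $x_j^2 = x_j \ell_j$ needs the $e_k$-correction terms tracked honestly through \eqref{lamk}, and I want to make sure the class $x_1 x_2$ is genuinely nonzero rather than killed by some monomial relation (this follows because $\langle \lambda_{k+1}, \lambda_{k+2}, \ldots\rangle$-type intersections are nonempty in a Bott tower — each $\mathbb{P}^1$-bundle stage contributes both a zero and an infinity section that meet the later fibers). Once the stage-one and stage-two relations are pinned down, the rest is the elementary algebra above, and independence is immediate from the basis property. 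I would also remark that the same method gives the class $w_{k+1}$ works on \emph{every} $M_k$ regardless of the parameters $c_{i,j}$, which is what makes the construction uniform across the infinite family.
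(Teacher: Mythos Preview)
Your argument is correct and essentially the same as the paper's: both compute in the Stanley--Reisner presentation of $H^*(M_k)$, use the relation $w_1^2=0$ (equivalently your $x_1^2=0$, since $x_1=w_{k+1}=w_1$) for the first class, and then solve a single linear condition coming from the stage-two relation to produce the second class. The only cosmetic difference is that the paper works in the basis $w_1,\ldots,w_k$ of $H^2(M_k)$ while you work in the basis $x_j=w_{k+j}$; after the linear substitution $w_{k+2}=w_2+c_{1,2}w_1$ your class $-c_{1,2}x_1+2x_2$ is literally the paper's $c_{1,2}w_1+2w_2$, and the worries you flag about higher-stage corrections and the nonvanishing of $x_1x_2$ do not arise since only the $j=1,2$ relations are needed.
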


 \begin{proof}  Let $L_j$ denote the holomorphic line bundle associated to the torus invariant divisor of $M_k$ corresponding to $\lambda_j^{(k)}$. Let $w_j$ be the 
 first Chern class of $L_j$. By \eqref{cohring}, the abelian group
 	$H^2(M_k)$ is freely generated by $\{ w_j \mid j=1,\ldots, k \}$.

 	 We also have the following linear relations corresponding to the rows of the characteristic matrix
 	 \eqref{lamk}, 
 	\begin{equation}\label{weqs}
 	\begin{array}{l}
 	w_{k+1} = w_1 \\

 	w_{k+2} = w_2 + c_{1,2} w_{k+1} \\  
 	
 	w_{k+3} = w_3 + c_{1,3} w_{k+1} + c_{2,3} w_{k+2} \\  
 	
 	\ldots \\
 	
 	w_{2k} = w_k + c_{1,k} w_{k+1} + c_{2,k} w_{k+2} + \ldots + c_{k-1,k} w_{2k-1} \,.
 	
 	\end{array}
 	\end{equation}

 	 We note that $\lambda_i^{(k)}$ and $\lambda_{k+i}^{(k)}$ never form a cone in the fan of  $M_k$. This  implies that    $w_i w_{k+i} = 0$ for  $i=1, \ldots, k$. 
 	 Then using the first equation in \eqref{weqs},  observe that  
 	 $$ w_1^2 =  w_1 w_{k+1} = 0 \,.$$
 	
 	 Next, assume $k \ge 2$. Then multiplying the second equation of \eqref{weqs} by $w_2$ and using $w_{k+1} = w_1$, we get 
 	 $$ w_2^2 = - c_{1,2} w_1 w_2  \,.$$  
 	It follows that $(x w_1 + y w_2)^2 = 0$ if $2x = c_{1,2} y$. 
 	Hence, $(c_{1,2} w_1 + 2 w_2)^2 = 0$. Thus, the line bundles $L_1$ and 
 	$ L_1^{ c_{1,2}} \otimes L_2^2$ over $M_k$ satisfy the desired property. 
 \end{proof}

 \begin{remark} Proceeding inductively in the above proof, we can express $w_j^2$ as a linear combination of the $w_i w_j$'s with $i<j$. Moreover, the classes $w_i w_j$ with $i< j \le k$
 	form a basis of $H^4(M_k)$.
 		However, it is necessary to impose conditions on the defining constants $c_{i,j}$'s of $M_k$ to ensure existence of further independent line bundles whose first Chern class squared is zero. For instance,  $(x w_1 + y_2 + z w_3)^2 = 0$ if and only if 
 	$$2x = c_{1,2} \, y, \quad 2y =  c_{2, 3}\, z , \quad {\rm and} \quad  2x = (c_{1,3} +  c_{1,2}\, c_{2,3}) \, z\,.$$ 
 	In addition, if $z \neq 0$, it follows that  $2 c_{1,3} + c_{1,2}\, c_{2,3} = 0$ is required. 
 \end{remark}

 \section{$J$-construction on nonsingular toric varieties}\label{jc}
 
 The origins of the $J$-construction can be traced back to \cite{PB} and \cite{KK} who referred to it as {\em simplicial wedge}  and {\em dual wedge} respectively. It was used by Ewald in
 \cite{ewald}, a work we will invoke below, who called it {\em  canonical extension}. It was referred to as the {\em doubling construction} in \cite{LdM}.  But it was introduced in its current avatar in \cite{BBCG}. The relevance of this technique to us is that it produces a projective toric variety of higher dimension starting from one of lower dimension, and the cohomology ring generators of the two varieties have a simple bijective correspondence.  
 
Let $X$ be a toric variety of dimension $n$ with $m$ torus invariant divisors. Let $J$ be an $m$-tuple of natural numbers, $J = (j_1, \ldots, j_m)$.
 A $J$-construction on  $X$ produces a toric variety $X(J)$ of dimension 
 $$d_J := n+ \sum_{k=1}^m (j_k - 1) \,.$$ 
 By definition, $X(J) = X$ when $J = (1, \ldots, 1)$.  In general, $X(J)$ can be obtained from $X$ by a sequence of {\em atomic} steps that increase $d_J$ by one. More precisely, such an atomic 
 step produces 
 $X(J)$ for $J = (j_1, \ldots, j_i, \ldots, j_m)$, where $j_i \ge 2$,  from $X(J')$ where $J' = (j_1, \ldots, j_i -1, \ldots, j_m) $. The order in which these atomic steps are performed is not important for the end result. An atomic step  is also commonly referred to as a {\em simplicial wedge construction}. 
  
  Without loss of generality, we demonstrate how to perform the simplicial wedge construction to produce
   $X(J)$, where $J=(2,1,\ldots,1)$, from $X$. We may conveniently refer to this as performing the simplicial wedge construction along the characteristic vector $\lambda_1$. 
 In  the language of fans, it amounts to building a complete fan $\Sigma (J) $ of one 
 higher dimension than $\Sigma$. To accomplish this, the fan $\Sigma$, with the exception of $\lambda_1$, is embedded in a coordinate hyperplane $x_{n+1}=0$ of $\mathbb{R}^{n+1}$, the support of  $\Sigma(J)$. 
 Naturally, we define ${\lambda_i}(J) = (\lambda_i,0)$ for $1 < i \le m$.
 The vector $\lambda_1$ is modified to ${\lambda}_1(J) = (\lambda_1, -1)$. The fan $\Sigma(J)$ has one additional characteristic vector, ${\lambda}_{m+1}(J) = (0,\ldots, 0,1)$.
 
  We may write $\widetilde{\lambda}_i$ for $\lambda_i(J)$ without confusion for notational simplicity. Moreover, we use the notation 
 $\lambda_i = (\lambda_{1i}, \ldots, \lambda_{ni})$ as in Section \ref{stv}.
  Thus, we have the following characteristic 
 matrix for  ${\Sigma}(J)$.
 $$ {\Lambda}(J) = \left[  \begin{array}{ccccc}
  \lambda_{11} & \lambda_{12} & \ldots & \lambda_{1m} & 0 \\
  \lambda_{21} & \lambda_{22} & \ldots & \lambda_{2m} & 0\\
  \ldots & \ldots & \ldots & \ldots & \ldots \\
  \lambda_{n1} & \lambda_{n2} & \ldots & \lambda_{nm} &0 \\
  -1 & 0 & \ldots & 0 & 1 \end{array}
 \right] $$
 
  The top or ($n+1$)-dimensional cones of the fan ${\Sigma}(J)$ are as follows. For every $n$-dimensional cone 
  $<\lambda_{i_1} , \ldots, \lambda_{i_n}>$ of $\Sigma$ that does not contain $\lambda_1$, there are two 
 $(n+1)$-dimensional cones in ${\Sigma}(J)$ namely, $<\widetilde{\lambda}_{i_1}, \ldots, \widetilde{\lambda}_{i_n}, \widetilde{\lambda}_{1} >$  and $<\widetilde{\lambda}_{i_1}, \ldots, \widetilde{\lambda}_{i_n}, \widetilde{\lambda}_{m+1} >$. 
 Moreover, for every $n$-dimensional cone that contains $\lambda_1$, say $ <\lambda_1, \lambda_{i_2}, \ldots, \lambda_{i_n} > $,  there is 
 an  $(n+1)$-dimensional cone in ${\Sigma}(J) $, 
 $< \widetilde{\lambda}_{1}, \widetilde{\lambda}_{i_2}, \ldots,
 \widetilde{\lambda}_{i_n}, \widetilde{\lambda}_{m+1} >$. 
 The subcones of these top dimensional cones constitute, and thus determine, the fan ${\Sigma}(J)$.
 
 The toric variety ${X}(J)$ corresponding to ${\Sigma}(J)$ is nonsingular if $X$ is so. We denote the torus invariant divisor of ${X}(J)$ corresponding to ${\lambda_j}(J)$ by $D_j(J) $,  and its first Chern class by $w_j(J)$, or simply by $\widetilde{w}_j$ when there is no scope for confusion. We can read off the linear relations among the generators $\widetilde{w}_j$'s from the rows of the matrix ${\Lambda}(J)$.
 Note that $\widetilde{w}_1 = \widetilde{w}_{m+1}$. It follows that the classes $\widetilde{w}_1, \ldots, \widetilde{w}_m$ generate the cohomology ring of $X(J)$. This easily generalises to the case of an arbitrary 
 $J$ by induction.

 
 The notations used above may be conveniently applied to the situation of a general $J$ as well. We refer the reader to \cite[Section 3]{BBCG} for a more comprehensive description of the $J$-construction.
  We content ourselves here by providing below the characteristic matrix $\Lambda(J)$ when $J = (3,2,1,\ldots, 1)$ as an example: 
 
 $$ {\Lambda}(J) = \left[  \begin{array}{cccccccc}
\lambda_{1} & \lambda_{2} & \lambda_3 & \ldots & \lambda_{m} & \vec{0} & \vec{0}  & \vec{0}\\
-1 & 0 & 0  & \ldots & 0 & 1 & {0}  & 0 \\ 
-1 & 0 & 0  & \ldots & 0 & 0 & 1  & 0 \\
0 & -1 & 0  & \ldots & 0 & 0 & 0 & 1 \\
\end{array} \right] $$

 


 \begin{defn} Let $X$ be a nonsingular toric variety with fan $\Sigma$.
	Suppose  $w_p $ in $H^2(X)$ is the cohomology class corresponding to the torus invariant divisor $D_p$. We say that $w_p$ has the {\it isolation property} if it 	admits a decomposition 	$w_p = \sum_{k \in I} a_k w_k$, where $ p \notin I$, such that $\lambda_p$ and $\lambda_k$ do not form a cone in $\Sigma$ for  each $k \in I$.    \end{defn}

\begin{lemma}\label{wsquare} Let $X$ be a nonsingular toric variety.
	Suppose $w_p$ admits the isolation property. Then $w_p^2 = 0$.  
\end{lemma}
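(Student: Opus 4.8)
The plan is to multiply the isolation decomposition of $w_p$ by $w_p$ itself and invoke the Stanley--Reisner type relations encoded in \eqref{cohring}. Recall from the description of $H^{\ast}(X)$ that the defining ideal $\mathcal{I}$ contains every monomial $w_{i_1}\cdots w_{i_k}$ whose index set $\{\lambda_{i_1},\ldots,\lambda_{i_k}\}$ does not span a cone of $\Sigma$; in particular, whenever $\lambda_i$ and $\lambda_j$ with $i\neq j$ fail to span a cone of $\Sigma$, we have $w_i w_j = 0$ in $H^{\ast}(X)$. This is exactly the type (1) relation in the presentation of the cohomology ring.

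Now write $w_p = \sum_{k\in I} a_k w_k$ as furnished by the isolation property, with $p\notin I$ and with $\lambda_p$, $\lambda_k$ not forming a cone of $\Sigma$ for each $k\in I$. By the previous paragraph, $w_p w_k = 0$ for every $k\in I$; here the condition $p \notin I$ is what we use to ensure $k \neq p$, so that each of these is a genuine Stanley--Reisner relation. Multiplying the decomposition through by $w_p$ then yields
$$ w_p^2 = w_p \sum_{k \in I} a_k w_k = \sum_{k\in I} a_k\, w_p w_k = 0 , $$
which is the assertion.

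There is no substantive obstacle: the argument is a one-line cohomology computation, essentially the abstraction of the $w_1^2 = 0$ step in the proof of Lemma \ref{linesquare}. The only point meriting care is the hypothesis $p \notin I$, without which the vanishing $w_p w_k = 0$ would degenerate to the false statement that the single ray $\lambda_p$ is not a cone of $\Sigma$; once this is noted, the proof is complete.
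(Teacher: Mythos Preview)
Your proof is correct and follows exactly the same route as the paper: multiply the isolation decomposition $w_p = \sum_{k\in I} a_k w_k$ by $w_p$ and use the Stanley--Reisner relations $w_p w_k = 0$ for $k\in I$. The paper's argument is the same one-line computation, stated slightly more tersely.
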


\begin{proof} Consider a decomposition 	$w_p = \sum_{k \in I} a_k w_k$, where $ p \notin I$, such that $\lambda_p$ and $\lambda_k$ do not form a cone in $\Sigma$ for  each $k \in I$.
	Note that 	$w_p^2 =  \sum_{k \in I} a_k w_k w_p$, and each $w_k w_p = 0$ as $\lambda_k, \lambda_p$ do not form a cone. \end{proof}

\begin{lemma}\label{isoJ} Let $X$ be a smooth toric variety. Let $X(J)$ be a nonsingular toric variety  obtained from $X$ by a $J$-construction.
	Suppose $w_p \in H^2(X)$ admits the isolation property. 
	Then $w_p(J)$ also admits the isolation property.
\end{lemma}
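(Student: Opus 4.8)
The plan is to reduce to a single simplicial wedge and then follow the decomposition witnessing the isolation property through the combinatorics of that wedge. Every $J$-construction is a finite composition of atomic steps and $X(J)$ is built up from $X$ through them, so by induction on $d_J-n=\sum_k(j_k-1)$ it suffices to treat the case in which $X(J)$ arises from $X$ by one simplicial wedge; after relabelling the characteristic vectors we may assume the wedge is performed along $\lambda_1$, so that in the notation of Section~\ref{jc} we have $\widetilde\lambda_i=(\lambda_i,0)$ for $2\le i\le m$, $\widetilde\lambda_1=(\lambda_1,-1)$, $\widetilde\lambda_{m+1}=(0,\dots,0,1)$, and $\Sigma(J)$ has the maximal cones described there.

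Fix a decomposition $w_p=\sum_{k\in I}a_kw_k$ witnessing the isolation property, so $p\notin I$ and $\lambda_p,\lambda_k$ do not form a cone of $\Sigma$ for each $k\in I$. First I lift this relation to $H^2(X(J))$. By \eqref{cohring} the linear relations among $\widetilde w_1,\dots,\widetilde w_{m+1}$ are read off from the rows of $\Lambda(J)$: the first $n$ rows reproduce exactly the relations $\sum_{i=1}^m\lambda_{ji}\widetilde w_i=0$ of $X$ (the extra generator $\widetilde w_{m+1}$ appearing with coefficient $0$), while the last row yields only $\widetilde w_1=\widetilde w_{m+1}$. Hence $w_i\mapsto\widetilde w_i$ ($1\le i\le m$) carries every degree-two linear relation of $X$ to one of $X(J)$, and therefore $\widetilde w_p=\sum_{k\in I}a_k\widetilde w_k$ holds in $H^2(X(J))$; for a general $J$ the same reasoning applies at each atomic step, all indices occurring in the decomposition remaining in $\{1,\dots,m\}$.

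The heart of the matter is to check that $\widetilde\lambda_p$ and $\widetilde\lambda_k$ do not form a cone of $\Sigma(J)$ for every $k\in I$; granted this, $w_p(J)=\sum_{k\in I}a_k\widetilde w_k$ is the required decomposition and we are done. Since $X$ and $X(J)$ are smooth, their fans are simplicial, so a set of characteristic vectors spans a cone of the fan exactly when it lies in the ray set of some maximal cone. By the description in Section~\ref{jc}, the ray set of any maximal cone of $\Sigma(J)$ has one of the forms $\{\widetilde\lambda_{i_1},\dots,\widetilde\lambda_{i_n},\widetilde\lambda_1\}$ or $\{\widetilde\lambda_{i_1},\dots,\widetilde\lambda_{i_n},\widetilde\lambda_{m+1}\}$ with $\langle\lambda_{i_1},\dots,\lambda_{i_n}\rangle$ a maximal cone of $\Sigma$ not containing $\lambda_1$, or $\{\widetilde\lambda_1,\widetilde\lambda_{i_2},\dots,\widetilde\lambda_{i_n},\widetilde\lambda_{m+1}\}$ with $\langle\lambda_1,\lambda_{i_2},\dots,\lambda_{i_n}\rangle$ a maximal cone of $\Sigma$. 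If $\{\widetilde\lambda_p,\widetilde\lambda_k\}$ lay in one of these ray sets, then --- provided the wedge direction $\lambda_1$ differs from $\lambda_p$ and from every $\lambda_k$ with $k\in I$, so that the indices $p$ and $k$ are distinct from those of the two new rays $\widetilde\lambda_1,\widetilde\lambda_{m+1}$ --- deleting those new rays would exhibit $\{\lambda_p,\lambda_k\}$ inside the ray set of a maximal cone of $\Sigma$, that is, $\lambda_p$ and $\lambda_k$ would form a cone of $\Sigma$, contradicting the isolation property of $w_p$. Hence no such maximal cone exists, as wanted. I expect the only genuine subtlety to be precisely this last point --- keeping track of the wedge direction relative to $p$ and $I$, a condition the applications of the lemma will respect; the remainder of the argument is routine bookkeeping.
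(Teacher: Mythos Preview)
Your approach mirrors the paper's exactly: reduce by induction to a single simplicial wedge, observe that linear relations among the $w_i$ lift verbatim to relations among the $\widetilde w_i$, and then check that the relevant pairs of rays remain non-adjacent in $\Sigma(J)$. The paper's proof simply asserts that ``if $\lambda_i$ and $\lambda_j$ do not form a cone in $\Sigma$, neither do $\lambda_i(J)$ and $\lambda_j(J)$ in $\Sigma(J)$,'' without qualification. You have been more careful, and the caveat you flag is precisely the point at issue: that assertion holds only when neither $i$ nor $j$ equals the wedge index, since after the wedge along $\lambda_1$ both $\widetilde\lambda_1$ and $\widetilde\lambda_{m+1}$ are adjacent to \emph{every} other ray of $\Sigma(J)$.

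In fact the lemma as stated is false without your extra hypothesis. Take $X=\mathbb{P}^1$ with rays $\lambda_1=1$, $\lambda_2=-1$; then $w_1=w_2$ witnesses the isolation property for $w_1$ (here $I=\{2\}$). Wedging along $\lambda_1$ gives $X(J)=\mathbb{P}^2$, in which every pair of rays spans a $2$-cone; since $\widetilde w_1\neq 0$, no decomposition can witness the isolation property for $w_1(J)$ (and indeed $\widetilde w_1^2\neq 0$, so Lemma~\ref{wsquare} already forbids it). So your restriction ``the wedge direction differs from $\lambda_p$ and from each $\lambda_k$, $k\in I$'' is not a removable technicality but a genuine hypothesis. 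Your closing remark that the applications will automatically respect it is also not quite right: Corollary~\ref{corsq} uses the isolation property of $w_1$ in a Bott manifold $M_k$ with $I=\{k+1\}$, and nothing in the statement prevents $J$ from having $j_1\ge 2$ or $j_{k+1}\ge 2$. In short, you have correctly located a gap that the paper's own proof passes over; to make the argument sound, the statement needs the additional hypothesis you isolate.
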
	

\begin{proof} By induction, it is enough to verify this for an atomic $J$-construction. Without loss of generality, let 
	$J = (2, 1, \ldots, 1)$.  Then, $$ \sum a_i w_i = 0 \implies
	 \sum a_i w_i(J) = 0 \,.$$ 
	Moreover, if $\lambda_i$ and $\lambda_j$ do not form a cone in $\Sigma$, neither do 
	$\lambda_i(J)$ and $\lambda_j(J)$ in $\Sigma(J)$.
	The lemma follows.
\end{proof}

\begin{corollary}\label{corsq}
	 Suppose $X$ is a Bott manifold $M_k$.	
	Let  $ X(J)$ be a nonsingular toric variety  obtained from $X$ by a $J$-construction.
	Then there exist nontrivial line bundles on $X(J)$ 
	whose first Chern class squared is zero. 
\end{corollary}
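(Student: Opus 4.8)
The plan is to deduce the corollary from Lemmas~\ref{wsquare} and \ref{isoJ} together with the computation already carried out in the proof of Lemma~\ref{linesquare}. The key observation is that the class $w_1 \in H^2(M_k)$ has the \emph{isolation property}: by the first relation in \eqref{weqs} we have $w_1 = w_{k+1}$, while $\lambda_1^{(k)}$ and $\lambda_{k+1}^{(k)}$ never span a cone of the fan of $M_k$, so $w_1 = w_{k+1}$ is an admissible decomposition with index set $I = \{k+1\}$. Applying Lemma~\ref{isoJ} to $X = M_k$ and $p = 1$, the class $w_1(J) \in H^2(X(J))$ again has the isolation property, and Lemma~\ref{wsquare} then gives $w_1(J)^2 = 0$.

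It remains to check that $w_1(J) \neq 0$, which guarantees that the line bundle $L_1(J)$ with $c_1(L_1(J)) = w_1(J)$ is nontrivial. By induction it suffices to treat a single atomic step, say $J = (2,1,\ldots,1)$; reading the relations off the rows of $\Lambda(J)$, the only new relation beyond those of $H^2(M_k)$ is $w_{m+1}(J) = w_1(J)$ (the last row), so $w_j \mapsto w_j(J)$ for $1 \le j \le m$ induces an isomorphism $H^2(M_k) \to H^2(X(J))$. Since $w_1$ is one of the free generators $w_1,\dots,w_k$ of $H^2(M_k)$ recalled in the proof of Lemma~\ref{linesquare}, it is nonzero, hence so is $w_1(J)$. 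For $k \ge 2$ the same isomorphism sends the second class $c_{1,2}w_1 + 2w_2$ of Lemma~\ref{linesquare} to $c_{1,2}w_1(J) + 2w_2(J)$, whose square vanishes as well: the relation $w_{k+2}(J) = w_2(J) + c_{1,2}w_1(J)$ holds (a row of $\Lambda(J)$) and $w_2(J)\,w_{k+2}(J) = 0$ because $\lambda_2^{(k)}$ and $\lambda_{k+2}^{(k)}$ do not span a cone and non-cones stay non-cones under the $J$-construction (cf.\ the proof of Lemma~\ref{isoJ}). Thus $X(J)$ admits at least as many independent nontrivial line bundles with square-zero first Chern class as $M_k$ does.

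I do not anticipate a genuine obstacle; the only step needing a little care is the nontriviality bookkeeping, namely confirming that the $J$-construction does not annihilate $w_1$ in degree two, which is immediate from the explicit shape of $\Lambda(J)$.
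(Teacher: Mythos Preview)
Your proof is correct and follows essentially the same route as the paper: both arguments observe that $w_1$ in $M_k$ has the isolation property (via $w_1=w_{k+1}$ with $\lambda_1^{(k)},\lambda_{k+1}^{(k)}$ not spanning a cone) and then invoke Lemmas~\ref{wsquare} and \ref{isoJ}. You add useful extra detail the paper leaves implicit, namely the verification that $w_1(J)\neq 0$ via the $H^2$ isomorphism under atomic steps, and you go further than the paper by also transporting the second class $c_{1,2}w_1+2w_2$ to $X(J)$.
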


\begin{proof} Note that the class $w_p$ in a Bott manifold $M_k$, where $p\le k$, has the isolation property if  the constants $c_{1,p}, \ldots, c_{p-1, p}$ are zero (see \eqref{weqs}) as $\lambda_p^{(k)}$ and $\lambda_{p+k}^{(k)}$ do not form a cone. In particular, the class $w_1$ in a Bott manifold always has the isolation property. Then the result follows from Lemmas \ref{wsquare} and \ref{isoJ}. 	
\end{proof}

\section{Construction of SKT metrics}\label{metric}

Suppose $\pi: E \to X$ is principal 
torus bundle over a complex manifold $X$ with a $2k$-dimensional real torus $T \cong (S^1)^{2k}$ as fiber. 
The Lie algebra $\mathfrak{t}$ of $T$ has a natural lattice given by the circle subgroups of $T$.  A choice of an integral basis of $\mathfrak{t}$ gives  a decomposition of
$T$ into a product of $S^1$'s.

Let $\Theta$ be a connection $1$-form on the bundle $E \to X$  and   $(\theta_1, \ldots, \theta_{2k})$ be a representation of $\Theta$ corresponding to a  basis of $\mathfrak{t}$. Define an almost complex structure $\mathcal{J}_E$ on $E$ by lifting the complex structure $\mathcal{J}_X$ on $X$ to the horizontal space of $\Theta$, and by defining $$\mathcal{J}_E \,\theta_{2j-1} = \theta_{2j}, \; 1\le j\le k \,,$$ along the vertical directions. The Chern classes of $\pi: E \to X$ are generated by $\omega_j \in \Omega^2(X)$ where $\pi^{\ast}(\omega_j) = d\theta_j$. If the classes
 $w_j := [\omega_j] \in H^2(X, \mathbb{R})$ are of type $(1,1)$, then  $\mathcal{J}_E$ is integrable and $\pi$ is holomorphic with respect to it, see \cite[Lemma 1]{ggp}.

 If the  bundle $\pi: E \to X$ is obtained by reduction of structure group from a holomorphic principal bundle over $X$, then the above construction of integrable complex structure on $E$ is applicable, see \cite[Section 5]{PT}. 
In the sequel, we will  apply this
construction when the torus bundle is obtained by reduction from a direct sum of holomorphic line bundles. 
 
 Suppose that $g_X$ is a Hermitian metric on $X$ with fundamental form $F_X$. Following \cite{ggp}, consider a 
 Hermitian metric $g_E$ on $E$ defined by $$g_E = \pi^{\ast} g_X + \sum_{j=1}^{2k} \theta_j \otimes \theta_j \,.$$ The fundamental form of $g_E$ is given by 
 $$F_E = \pi^{\ast} F_X + \sum_{j=1}^{k} \theta_{2j-1} \wedge \theta_{2j} \,.   $$
 It follows that 	    
$$ dd^c F_E =  \pi^{\ast}dd^c F_X - \sum_{j=1}^{2k} \pi^{\ast}( \omega_j \wedge \omega_j)\,.
$$
Thus $E$ admits an SKT structure if 
\begin{equation}\label{skt}
dd^c F_X = \sum_{j=1}^{2k}  \omega_j \wedge \omega_j \,.	
\end{equation}	

Therefore, if $X$ is K\"ahler, the vanishing of $\sum \omega_j \wedge \omega_j  $
is sufficient for existence of an SKT structure on $E$. The following lemma, which is based on a trick in the proof of Theorem 15 of \cite{ggp}, shows that the vanishing of
$\sum [\omega_j] \wedge [\omega_j] = \sum w_j^2  $ is sufficient.

\begin{lemma}\label{suf}
	If $X$ is K\"ahler and $\sum_{j=1}^{2k} w_j^2 = 0$, then $E$ admits an SKT  structure.
\end{lemma}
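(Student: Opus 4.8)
The plan is to use the identity
$$dd^c F_E = \pi^\ast dd^c F_X - \sum_{j=1}^{2k}\pi^\ast(\omega_j\wedge\omega_j)$$
from the preceding paragraph together with the $\partial\bar\partial$-lemma on $X$: the point is to replace the K\"ahler metric $g_X$ by a (generally non-K\"ahler) Hermitian metric $g_X'$ on $X$ that satisfies \eqref{skt} exactly, and then feed $g_X'$ into the construction above.

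First I would fix the auxiliary data on $X$. Since the classes $w_j$ are of type $(1,1)$ — this is exactly the hypothesis making $\mathcal{J}_E$ integrable, cf.\ \cite[Lemma 1]{ggp} — we may choose the connection $\Theta$ on $E$ so that each curvature form $\omega_j$ is a \emph{closed real $(1,1)$-form} representing $w_j$; for instance take $\omega_j$ to be the harmonic representative, which is real and of type $(1,1)$ because $w_j\in H^{1,1}(X)\cap H^2(X,\mathbb{R})$. With this choice $\mathcal{J}_E$ is integrable and $\pi$ is holomorphic. Now set $\eta:=\sum_{j=1}^{2k}\omega_j\wedge\omega_j$; it is a $d$-closed real form of pure type $(2,2)$ whose de Rham class is $\sum_{j=1}^{2k}w_j^2=0$ by hypothesis, hence $\eta$ is $d$-exact.

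The key step is to promote this cohomological vanishing to a form-level statement. Because $\eta$ is $d$-exact and of pure type $(2,2)$ — so in particular both $\partial$- and $\bar\partial$-closed — the $\partial\bar\partial$-lemma on the compact K\"ahler manifold $X$ yields a $(1,1)$-form $\psi$ with $\eta = i\partial\bar\partial\psi$; replacing $\psi$ by its real part (legitimate since $\eta$ is real) and rescaling, we obtain a \emph{real} $(1,1)$-form $\phi$ with $dd^c\phi = \eta$. Since $X$ is compact and $F_X$ is positive, we may choose a constant $t>0$ large enough that $F_X':=t F_X + \phi$ is a positive real $(1,1)$-form, hence the fundamental form of a Hermitian metric $g_X'$ on $X$. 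As $X$ is K\"ahler, $dd^c F_X=0$, so
$$dd^c F_X' = dd^c\phi = \eta = \sum_{j=1}^{2k}\omega_j\wedge\omega_j,$$
i.e.\ $g_X'$ satisfies \eqref{skt}.

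Finally I would run the construction of the preceding paragraph verbatim with $g_X'$ in place of $g_X$ — the complex structure $\mathcal{J}_E$ is unchanged, as it depends only on $\mathcal{J}_X$ and $\Theta$: the metric $g_E=\pi^\ast g_X' + \sum_{j=1}^{2k}\theta_j\otimes\theta_j$ has fundamental form $F_E=\pi^\ast F_X' + \sum_{j=1}^k\theta_{2j-1}\wedge\theta_{2j}$, and the displayed formula gives $dd^c F_E = \pi^\ast\eta - \pi^\ast\eta = 0$, so $(E,\mathcal{J}_E,g_E)$ is SKT. The one genuinely nontrivial ingredient is the passage from $\sum w_j^2=0$ in cohomology to $\eta=dd^c\phi$ at the level of forms, which is precisely where compactness of $X$ and the $\partial\bar\partial$-lemma enter (both available for the smooth projective toric varieties to which the lemma will be applied); the remaining points — reality of $\phi$, the choice of $t$, and the curvature bookkeeping — are routine.
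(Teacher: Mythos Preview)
Your proof is correct and follows essentially the same route as the paper: both arguments use the $dd^c$-lemma (equivalently, the $\partial\bar\partial$-lemma) on the compact K\"ahler base to write $\sum_j \omega_j\wedge\omega_j = dd^c\alpha$ for a real $(1,1)$-form $\alpha$, and then add a sufficiently large multiple of a K\"ahler form to $\alpha$ to obtain a positive $(1,1)$-form serving as the fundamental form of a Hermitian metric satisfying \eqref{skt}. Your version is slightly more explicit about choosing the curvature forms $\omega_j$ to be of pure type $(1,1)$ via harmonic representatives, which the paper leaves implicit.
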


\begin{proof}
As $\sum w_j^2 = 0$,
 the (2,2)-form $\sum \omega_j\wedge \omega_j $ is $d$-exact and $d$-closed. The complex structure  $\mathcal{J}_X$ on $X$  preserves $(2,2)$-forms. Hence, $\sum \omega_j \wedge \omega_j $
is $d^c$-closed. So, by the $dd^c$-lemma, there exists a real $(1,1)$-form $\alpha$ on $X$ such that $\sum \omega_j \wedge \omega_j = dd^c\alpha $.
  Choose an appropriate multiple $\beta$ of a K\"ahler form on $X$ such that
$$\min_{p\in X}(\min_{|| \zeta || =1} \beta_p( \zeta, \mathcal{J}_X \zeta) ) > - \min_{p\in X}(\min_{|| \zeta || =1} \alpha_p( \zeta, \mathcal{J}_X \zeta) )\,.$$ 
Then the positive definite form $\alpha+\beta$ defines a Hermitian metric $g_X$ on $X$ which satisfies \eqref{skt}.  
\end{proof}
	
\begin{theorem}\label{tbun} Suppose $X$ is a Bott manifold $M_k$.	
	Let  $ X(J)$ be a nonsingular toric variety  obtained from $X$ by a $J$-construction.
	 Then there exist nontrivial principal torus bundles $E(J)$ over $X(J)$ such that the total space of $E(J)$ admits an SKT structure.   
\end{theorem}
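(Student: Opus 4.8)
The plan is to assemble the pieces already in place. First I would recall from Corollary \ref{corsq} that $X(J)$ carries nontrivial line bundles whose first Chern class squares to zero; in fact the argument there produces, via the isolation property of $w_1$ (and of further $w_p$ when the relevant $c_{i,p}$ vanish), at least one such class $\widetilde w$, and by Lemma \ref{linesquare} together with Lemma \ref{isoJ} one can arrange several independent ones on a suitably chosen $M_k$. So choose line bundles $L_{(1)}, \ldots, L_{(2k')}$ on $X(J)$ (for some $k' \ge 1$) with $c_1(L_{(j)})^2 = 0$ in $H^4(X(J);\ZZ)$ for each $j$, picked so that not all are trivial.

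Next I would form the principal torus bundle. Take the direct sum $\bigoplus_{j=1}^{2k'} L_{(j)}$, a holomorphic vector bundle over the smooth projective — hence Kähler — toric variety $X(J)$, and reduce its structure group to the maximal compact torus $T \cong (S^1)^{2k'}$ sitting in $(\CC^\ast)^{2k'}$; this yields a principal $T$-bundle $E(J) \to X(J)$ which is nontrivial because at least one summand is nontrivial. The Chern classes of $E(J)$ are generated by $w_j := c_1(L_{(j)}) \in H^2(X(J);\RR)$, and since each $L_{(j)}$ is holomorphic these classes are of type $(1,1)$; hence by \cite[Lemma 1]{ggp} (and the discussion citing \cite[Section 5]{PT} for bundles obtained by reduction) the almost complex structure $\JJC_{E(J)}$ built from a connection on $E(J)$ is integrable and $\pi$ is holomorphic.

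Now I would invoke Lemma \ref{suf}: $X(J)$ is Kähler and $\sum_{j=1}^{2k'} w_j^2 = \sum_j c_1(L_{(j)})^2 = 0$ by construction, so $E(J)$ admits an SKT structure. Explicitly, the lemma produces via the $dd^c$-lemma a real $(1,1)$-form $\alpha$ with $\sum \omega_j \wedge \omega_j = dd^c\alpha$, and a large multiple $\beta$ of a Kähler form so that $\alpha + \beta$ is positive; the associated Hermitian metric on $X(J)$ satisfies \eqref{skt}, whence $g_{E(J)} = \pi^\ast g_{X(J)} + \sum \theta_j \otimes \theta_j$ has $dd^c$-closed fundamental form. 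This completes the construction.

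The only genuine subtlety — and the step I would treat most carefully — is ensuring that $E(J)$ is \emph{nontrivial} as a principal bundle, i.e.\ that we do not inadvertently choose all $w_j = 0$. This is handled by Lemma \ref{linesquare}: on $M_k$ itself the class $w_1$ is nonzero (it freely generates part of $H^2$), and Lemma \ref{isoJ} shows the isolation property, hence the vanishing-square property, persists for $w_1(J)$ on $X(J)$; moreover $\widetilde w_1 \ne 0$ since the cohomology generators of $X$ inject into those of $X(J)$. So one may simply take, say, $L_{(1)} = \cdots = L_{(2k')}$ to be the line bundle associated to $w_1(J)$, giving a nontrivial $E(J)$ with the required property for every $J$. (If one wants inequivalent examples for varying $J$, invoke the second part of Lemma \ref{linesquare} on $M_k$ with $k \ge 2$.) Everything else is a direct citation of the earlier lemmas and of \cite{ggp,PT}, so no further computation is needed.
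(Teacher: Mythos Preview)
Your proposal is correct and follows essentially the same route as the paper: invoke Corollary~\ref{corsq} for line bundles with square-zero Chern class, reduce the direct sum of an even number of them to a principal torus bundle, and apply Lemma~\ref{suf} using that $X(J)$ is K\"ahler. The one point you assert without justification is that $X(J)$ is projective; the paper supplies this by citing \cite[Theorem~2]{ewald}, and you should do the same rather than treat it as given.
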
	

\begin{proof}   As noted in Corollary \ref{corsq} ,  there exist nontrivial line bundles on $X(J)$ 
	whose first Chern class squared is zero. (This also follows  from Lemma \ref{linesquare} when $X(J) =X$.)
	 Consider $E(J)$ to be the torus bundle over $X(J)$ obtained by the reduction of structure group from the direct sum of an even number of such  line bundles. 
As $X$ is projective,  $X(J)$ is also projective (cf. \cite[Theorem 2]{ewald}), and hence K\"ahler.  
Then $E(J)$ admits an SKT structure by Lemma \ref{suf}.
\end{proof}	

Note that the above procedure produces a family of SKT metrics on $E(J)$ parametrised by an open subset of the
 space of K\"ahler metrics on $M_k$. A good reference for K\"ahler metrics on Bott manifolds is the recent article \cite{BCT}
 by Boyer et al. 

Compact connected Lie groups of even dimension admit invariant complex structures (cf. \cite{pittie} or \cite[Section 2]{PT}). Let $G$ be such a group  whose rank  $ \ge 2k$. Then there exists an injective homomorphism of Lie groups, $\phi: T \to G$ such that the image of $\phi$ is a closed subgroup of $G$.  A choice of a complex structure on $T$ has been made 
above while defining $\mathcal{J}_E$. Assume that we have used an integral basis of $\mathfrak{t}$ in defining $\Theta$. It is then explained in \cite[Section 5]{PT} how an invariant complex 
structure may be chosen on $G$ so that the map $\phi$ is holomorphic. The following result then follows from the proof of \cite[Theorem 5.2]{PT}.  

\begin{corollary}
Let $E(J)$ be a principal $T$-bundle as in Theorem \ref{tbun}, and let $\phi: T \to G$ be a holomorphic
monomorphism of Lie groups as above. Then the total space of the principal $G$-bundle 
${\displaystyle E(J) \times^\phi G}$ admits an SKT structure.  
\end{corollary}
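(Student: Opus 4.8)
The plan is to exhibit $\widetilde{E} := E(J)\times^{\phi}G$ as a holomorphic fiber bundle over the K\"ahler manifold $X(J)$ with \emph{SKT} fiber $G$, to equip it with a Hermitian metric of the type appearing in Lemma~\ref{suf}, and then to run the $dd^{c}$-lemma argument of that lemma once the relevant quadratic expression in Chern classes is seen to vanish; the analytic core is the computation in the proof of \cite[Theorem~5.2]{PT}, and what follows is an adaptation of it. First, since $\dim G$ is even the number of roots of $G$ is even, so a maximal torus $T_{G}\subset G$ has even rank $r\ge 2k$, and as $\phi$ has image a closed torus in $G$ it may be assumed to factor through $T_{G}$. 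As recalled just before the statement (see \cite[Section~5]{PT}), one chooses the invariant complex structure $\mathcal{J}_{G}$ together with a compatible bi-invariant metric $g_{G}$ on $G$ so that $\mathcal{J}_{G}$ extends the complex structure on $T$ used to define $\mathcal{J}_{E}$ (whence $\phi$ is holomorphic) and so that the projection $G\to G/T_{G}$ onto the generalized flag manifold is holomorphic. The principal $T$-bundle underlying $E(J)$, with the complex structure built from the connection $\Theta$ of Section~\ref{metric}, is holomorphic, so by \cite[Section~5]{PT} the associated bundle $\widetilde{E}$ carries an integrable complex structure for which $\pi\colon\widetilde{E}\to X(J)$ is a holomorphic fiber bundle with fiber $(G,\mathcal{J}_{G})$.

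Next I would put on $\widetilde{E}$ the Hermitian metric $g_{\widetilde{E}}$ obtained from $\pi^{*}g_{X}$ on the horizontal distribution of the induced connection $\phi_{*}\Theta$ and from $g_{G}$ along the fibers; this is well defined since $g_{G}$, being bi-invariant, is in particular left $\phi(T)$-invariant. Its fundamental form is $F_{\widetilde{E}}=\pi^{*}F_{X}+F^{\mathrm{vert}}$, with $F^{\mathrm{vert}}$ restricting on each fiber to the fundamental form of $(G,g_{G},\mathcal{J}_{G})$. Note that $(G,g_{G},\mathcal{J}_{G})$ is itself SKT: for a bi-invariant metric the connection $\nabla^{-}$ (parallelizing the left-invariant frame) is Hermitian and has totally skew-symmetric torsion equal, up to sign, to the Cartan $3$-form $\langle[\cdot,\cdot],\cdot\rangle$, which is closed, so it is the Bismut connection and $d\mu=0$. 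Computing $dd^{c}F_{\widetilde{E}}$ as in \cite[Theorem~15]{ggp} and \cite[Theorem~5.2]{PT}, one uses that the curvature of $\phi_{*}\Theta$ equals $\phi_{*}$ of the curvature of $\Theta$, hence takes values in the \emph{abelian} subalgebra $\phi_{*}\mathfrak{t}\subset\mathfrak{t}_{G}$, with components the forms $\omega_{1},\dots,\omega_{2k}$ satisfying $[\omega_{j}]=w_{j}$. The purely vertical contribution to $dd^{c}F_{\widetilde{E}}$ vanishes because the fiber is SKT, and the mixed horizontal–vertical terms coming from the non-abelian structure of $G$ cancel because the curvature lies in an abelian subalgebra; what survives is
\[
dd^{c}F_{\widetilde{E}}=\pi^{*}\!\Bigl(dd^{c}F_{X}-\sum_{j=1}^{2k}\omega_{j}\wedge\omega_{j}\Bigr),
\]
which is exactly the conclusion of \cite[Theorem~5.2]{PT}.

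To finish, recall that by Theorem~\ref{tbun} (via Corollary~\ref{corsq}) the torus bundle $E(J)$ is the reduction of a direct sum of an even number of line bundles whose first Chern class has square zero, whence $\sum_{j=1}^{2k}w_{j}^{2}=0$ in $H^{4}(X(J))$, and $X(J)$ is projective, hence K\"ahler. Then, exactly as in the proof of Lemma~\ref{suf}, $\sum_{j}\omega_{j}\wedge\omega_{j}$ is $d$- and $d^{c}$-exact, so $\sum_{j}\omega_{j}\wedge\omega_{j}=dd^{c}\alpha$ for a real $(1,1)$-form $\alpha$ on $X(J)$; after adding to $\alpha$ a sufficiently large multiple $\beta$ of a K\"ahler form, the Hermitian metric with fundamental form $\alpha+\beta$ satisfies $dd^{c}F_{X}=\sum_{j}\omega_{j}\wedge\omega_{j}$, and with this choice of $g_{X}$ one obtains $dd^{c}F_{\widetilde{E}}=0$. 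Hence the total space of $E(J)\times^{\phi}G$ admits an SKT structure, and, as in the remark following Theorem~\ref{tbun}, one in fact gets a whole family of SKT metrics parametrized by an open subset of the space of K\"ahler metrics on $M_{k}$.

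The only step that is not formal is the cancellation, in the computation of $dd^{c}F_{\widetilde{E}}$, of the correction terms arising from $[\mathfrak{t}_{G},\mathfrak{g}_{\alpha}]$ and from the interaction of the connection with the fiberwise Bismut torsion; this is precisely what is carried out in the proof of \cite[Theorem~5.2]{PT}, so operationally the corollary amounts to invoking that theorem with $X=X(J)$, $E=E(J)$, and the SKT metric on $E(J)$ produced by Lemma~\ref{suf}. Alternatively one could pass to the principal $T_{G}$-bundle $\widetilde{E}\to E(J)\times^{\phi}(G/T_{G})$, whose base is a K\"ahler flag bundle over $X(J)$, and reapply Lemma~\ref{suf}; but verifying the vanishing of the corresponding sum of squares of Chern classes there requires the same bookkeeping, so I would present the direct argument above.
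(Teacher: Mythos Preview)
Your proposal is correct and follows the same route as the paper: the paper's proof is simply the one-line remark that the result ``follows from the proof of \cite[Theorem 5.2]{PT}'', and what you have written is a faithful expansion of that invocation, combining the $dd^{c}$-computation of \cite[Theorem~5.2]{PT} with the $dd^{c}$-lemma argument of Lemma~\ref{suf} and the vanishing of $\sum w_{j}^{2}$ from Theorem~\ref{tbun}. Your additional observations (even rank of $T_{G}$, the fiber $G$ being SKT for a bi-invariant metric, the alternative flag-bundle approach) are correct elaborations that the paper leaves implicit in the citation.
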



\section*{Acknowledgements}

 The research of the last named author has been supported in part by a SERB MATRICS Grant, MTR/2019/001613, and an SRP grant from Middle East Technical University Northern Cyprus Campus. He is grateful to Anthony Bahri for introducing him to the $J$-construction. He also thanks Ajay Singh Thakur for useful conversations and collaboration on related topics.

\bibliographystyle{spmpsci}

\end{document}